\newtheorem{lemma}{Lemma}
\newtheorem{theorem}{Theorem}
\numberwithin{equation}{section}
\begin{document}
\leftline{ \scriptsize \it  }
\title[Complex New Operators]
{Approximation by Complex Baskakov-Sz\'asz-Durrmeyer Operators in Compact Disks}
\maketitle
\begin{center}
{\bf Sorin G. Gal}\\
Department of Mathematics and Computer Science,
University of Oradea\\
Str. Universitatii No. 1, 410087 Oradea, Romania\\
e-mail address : galso@uoradea.ro \\

and

{\bf Vijay Gupta}\\
Department of Mathematics,
Netaji Subhas Institute of Technology\\
Sector 3 Dwarka, New Delhi-110078, India\\
e-mail address : vijaygupta2001@hotmail.com \\
\end{center}

\begin{abstract}
In the present paper, we deal with the complex Baskakov-Sz\'asz-Durrmeyer mixed operators and study Voronovskaja type results with quantitative estimates for these operators attached to analytic functions of exponential growth in $\mathbb{D}_{R}=\{z\in \mathbb{C} ; |z|<R\}$. Also, the exact order of approximation is found. The method used allows to construct complex Sz\'asz-type and Baskakov-type approximation operators without to involve the values on $[0, \infty)$.
\end{abstract}
{\bf Subject Classification.} \,30E10,\, 41A25,\, 41A28.\\
{\bf Keywords.} Complex  Baskakov-Sz\'asz-Durrmeyer operators, Voronovskaja type result, exact order of approximation in compact disks,
simultaneous approximation.

\section{Introduction}

The study of approximation properties for the Sz\'asz type operators on $[0, +\infty)$ was well established in  \cite{szasz} and then generalized in various ways, see e.g. \cite{AG1}. Also, very recently, approximation properties for several real operators including the Sz\'asz-Durrmeyer operators are presented in the book \cite{vn}.
In order to approximate integrable functions on the positive real axis, in \cite{vgggs} it was proposed the modifications of the Baskakov operators with the weights of Sz\'{a}sz basis functions under the integral sign. These operators reproduce only constant functions. Ten years later
in \cite{PNA_AJM} it was proposed yet another sequence of the Baskakov-Sz\'{a}sz-Durrmeyer operators which preserve constant as well as linear functions. Also, generalizations of the Durrmeyer polynomials were studied in e.g. \cite{AG2}.

In the complex domain, the overconvergence phenomenon holds, that is the extension of approximation properties from real domain to complex domain. In this context, the first qualitative kind results were obtained in the papers \cite{DGP1}, \cite{WR}, \cite{Wood1}. Then,
in the books \cite{Gal}, \cite{og} quantitative approximation results are presented for several type of approximation operators. For Sz\'{a}sz-Mirakjan operator and its Stancu variant in complex domain, we refer the readers to \cite{Aydin}, \cite{Ce_Is}, \cite{Ispir1}, \cite{G1}, \cite{Mah1}, \cite{Mah2}, \cite{Su_Ib} and \cite{VGDKV}. Also for complex Bernstein-Durrmeyer operators, several papers are available in the literature (see e.g. \cite{G2}, \cite{G3}, \cite{GaGu1}, \cite{GaGu2}, \cite{G6}, \cite{Mah3}, \cite{Os}, \cite{Ren1}, \cite{Ren2}),
for complex Sz\'{a}sz-Durrmeyer operators see \cite{CS}, while for complex $q$-Bal\'asz-Szabados operators see \cite{Ispir2}.

In the present paper, we study the rate of approximation of analytic functions in a disk $\mathbb{D}_{R}=\{z\in \mathbb{C} ; |z|<R\}$, i.e. $f(z)=\sum_{k=0}^{\infty}c_{k}z^{k}$, of exponential growth, and the Vo\-ro\-nov\-ska\-ja type result, for a natural derivation from the complex  operator $L_{n}(f)(z)$ introduced in the case of real variable in \cite{PNA_AJM}, and formally defined as operator of complex variable by
\begin{equation}\label{e:PNA_AJM1}
L_n(f)(z):=n\sum_{v=1}^\infty b_{n,v}(z)\int_0^\infty s_{n,v-1}(t)f(t)dt+(1+z)^{-n}f(0), z\in \mathbb{C},
\end{equation}
where
$$b_{n, k}(z)=\left(
\begin{array}{c}
n+k-1 \\
k
\end{array}
\right)\frac{z^k}{(1+z)^{n+k}}, s_{n,k}(t)=e^{-n t}\frac{(nt)^k}{k!}.$$
An important relationship used for the quantitative results in approximation of an analytic function $f$ by the complex operator $L_{n}(f)$ would be $L_{n}(f)(z)=\sum_{k=0}^{\infty}c_{k}L_{n}(e_{k})(z)$, but which requires some additional hypothesis on $f$ (because the definition of $L_{n}(f)(z)$ involves the values of $f$ on $[0, +\infty)$ too) and implies restrictions on the domain of convergence. This situation can naturally be avoided, by defining directly the approximation complex operator
$$L_{n}^{*}(f)(z)=\sum_{k=0}^{\infty}c_{k}\cdot L_{n}(e_{k})(z),$$
whose definition evidently that omits the values of $f$ outside of its disk of analyticity.

In this paper we deal with the approximation properties of the complex operator $L_{n}^{*}(f)(z)$.

It is worth noting here that if instead of the above defined $L_{n}(f)(z)$ we consider any other Sz\'asz-type or Baskakov-type complex operator, then
for $L_{n}^{*}(f)(z)$ defined as above, all the quantitative estimates in e.g. \cite{Aydin}, \cite{Ce_Is}, \cite{G1}, \cite{Gal4}, \cite{CS}, \cite{Gal5}, \cite{VGDKV}, \cite{Ispir1}, \cite{Mah1} hold true identically, without to need the additional hypothesis on the values of $f$ on $[0, \infty)$ imposed there.

Everywhere in the paper we denote $\|f\|_{r}=\max\{|f(z)| ; |z|\le r\}$.

\section{Auxiliary Result}
In the sequel, we need the following lemma :
\begin{lemma} \label{l:2} Denoting $e_{k}(z)=z^{k}$ and $T_{n, k}(z)=L_{n}(e_{k})(z)$, we have the recurrence formula
$$T_{n, k+1}(z)=\frac{z(1+z)}{n}T^{\prime}_{n, k}(z)+\frac{nz+k
}{n}T_{n, k}(z).$$
Also, $T_{n, k}(z)$ is a polynomial of degree $k$.
\end{lemma}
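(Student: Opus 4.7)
My plan is to derive the recurrence by differentiating termwise the series that represents $T_{n,k}$, combining a differential identity for $b_{n,v}(z)$ with a simple moment identity for the integrals of $s_{n,v-1}$.

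First I would separate out the $(1+z)^{-n}f(0)$ term: since $e_k(0)=0$ for $k\ge 1$, that piece contributes only when $k=0$, so for $k\ge 1$ we have the clean representation
$$T_{n,k}(z)=n\sum_{v=1}^\infty b_{n,v}(z)\,I_{n,v,k},\qquad I_{n,v,k}:=\int_0^\infty s_{n,v-1}(t)\,t^k\,dt=\frac{(v+k-1)!}{(v-1)!\,n^{k+1}},$$
the last equality being a standard Gamma integral. Direct computation then gives $T_{n,0}(z)=1$ (from $\sum_{v=0}^\infty b_{n,v}(z)=1$) and $T_{n,1}(z)=z$ (from $\sum_{v=1}^\infty v\,b_{n,v}(z)=nz$, the first negative-binomial moment); these dispose of the $k=0$ case of the recurrence at once and anchor the induction for the degree claim.

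The proof then rests on two identities. Logarithmic differentiation of $b_{n,v}(z)$ yields $z(1+z)\,b_{n,v}'(z)=(v-nz)\,b_{n,v}(z)$, while either a direct calculation with the explicit formula for $I_{n,v,k}$ or integration by parts based on $t\,s_{n,v-1}'(t)=(v-1-nt)\,s_{n,v-1}(t)$ yields $v\,I_{n,v,k}=n\,I_{n,v,k+1}-k\,I_{n,v,k}$.

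For $k\ge 1$ I would differentiate the defining series for $T_{n,k}$ termwise (legal on the half-plane $\mathrm{Re}(z)>-\tfrac{1}{2}$, where $|z/(1+z)|<1$ forces uniform convergence on compact subsets) and insert the first identity to obtain
$$\frac{z(1+z)}{n}\,T_{n,k}'(z)=\sum_{v=1}^\infty v\,b_{n,v}(z)\,I_{n,v,k}-z\,T_{n,k}(z).$$
The second identity rewrites the remaining sum as $T_{n,k+1}(z)-\tfrac{k}{n}T_{n,k}(z)$, and rearranging produces precisely the claimed recurrence on $\mathrm{Re}(z)>-\tfrac{1}{2}$; since both sides are polynomials, the identity extends to all of $\mathbb{C}$ by analytic continuation. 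The degree statement is then a one-line induction from the recurrence: if $T_{n,k}$ has degree $k$ with leading coefficient $a_k$, then the two summands on the right are polynomials of degree $k+1$ whose leading coefficients add to $\tfrac{n+k}{n}a_k\ne 0$. The only mildly delicate step is the justification of termwise differentiation, which is why I work on the half-plane first and invoke analytic continuation afterwards rather than manipulate the series formally on all of $\mathbb{C}$.
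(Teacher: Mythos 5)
Your argument is correct and follows essentially the paper's route: you differentiate the series termwise and use the same identity $z(1+z)b_{n,v}'(z)=(v-nz)b_{n,v}(z)$, the only variation being that you shift the moment index via the explicit Gamma evaluation of $I_{n,v,k}$ (equivalently, via the integration by parts on $t\,s_{n,v-1}'(t)=(v-1-nt)s_{n,v-1}(t)$, which is exactly what the paper performs under the integral sign). Your extra care --- isolating the $(1+z)^{-n}f(0)$ term for $k\ge 1$, checking $k=0$ directly, and justifying termwise differentiation on $\mathrm{Re}\,z>-\tfrac{1}{2}$ before extending to $\mathbb{C}$ by polynomiality --- only tightens details the paper leaves implicit.
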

\begin{proof} Using $z(1+z)b_{n,\nu}^\prime(z)=(\nu-nz)b_{n,\nu}(z)$, we have
\begin{eqnarray*}z(1+z)T_{n, k}^\prime(z)&=&n\sum_{\nu=1}^{\infty}z(1+z)b_{n, \nu}^\prime(z)\int_{0}^{\infty}s_{n,\nu-1}(t)t^{k}d t \\
&=&n\sum_{\nu=1}^{\infty}(\nu-n z)b_{n, \nu}(z)\int_{0}^{\infty}s_{n,\nu-1}(t)t^{k}d t\\
&=&n\sum_{\nu=1}^{\infty}b_{n, \nu}(z)\int_{0}^{\infty}[(\nu-1-nt)+(1+nt-n z)]s_{n,\nu-1}(t)t^{k}d t\\
&=&n\sum_{\nu=1}^{\infty}b_{n, \nu}(z)\int_{0}^{\infty}s_{n,\nu-1}^\prime(t)t^{k+1}d t\\
&&+(1-n z)T_{n,k}(z)+nT_{n,k+1}(z)
\end{eqnarray*}
Thus integrating by parts the last integral, we get.
\begin{eqnarray*}z(1+z)T_{n, k}^\prime(z)&=&-(k+1)T_{n,k}(z)+(1-n z)T_{n,k}(z)+nT_{n,k+1}(z)
\end{eqnarray*}
which completes the proof of the recurrence relation. Taking above step by step $k=1, 2, ..., $, by mathematical induction we easily get that $T_{n, k}(z)$ is a polynomial of degree $k$.
\end{proof}

\section{Main Results}
Our first main result is the following theorem for upper bound.

\begin{theorem} \label{t:1}
For $f:\mathbb{D}_{R}\to \mathbb{C}$, $1<R<+\infty$, analytic on $\mathbb{D}_{R}$, i.e. $f(z)=\sum_{k=0}^{\infty}c_{k}z^{k}$, for all $z\in \mathbb{D}_{R}$, suppose that there exist $M>0$ and $A\in(\frac{1}{R},1)$, with the property that $|c_k|\le M\frac{A^k}{k!},$ for all $k=0,1,...,$ (which implies $|f(z)|\leq Me^{A|z|}$ for all $z\in \mathbb{D}_{R}$).

(i) If $1\le r <\frac{1}{A}$, then for all $|z|\le r$ and $n\in \mathbb{N}$ with $n > r+2$, $L_{n}^{*}(f)(z)$ is well-defined and we have
\begin{eqnarray*}
|L_{n}^{*}(f)(z)-f(z)|\le \frac{C_{r, A, M}}{n},
\end{eqnarray*}
where $C_{r,A, M}=\frac{M(r+2)}{r}\cdot \sum_{k=2}^\infty (k+1)\cdot (r A)^k <\infty ;$

(ii) If $1\le r < r_{1} < \frac{1}{A}$, then for all $|z|\le r$ and $n,
p\in \mathbb{N}$ with $n>r+2$, we have
$$|[L_{n}^{*}(f)]^{(p)}(z)-f^{(p)}(z)|\le \frac{p!r_{1}C_{r_{1}, A, M}}{n (r_{1}-r)^{p+1}},$$
where $C_{r_1, A, M}$ is given as at the above point (i).
\end{theorem}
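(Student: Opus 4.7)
The plan is to pass from the coefficient estimates $|c_k|\le MA^k/k!$ to an estimate for $\|L_n^*(f)-f\|_r$ by controlling each error $E_{n,k}(z):=T_{n,k}(z)-e_k(z)$ separately and summing, then derive (ii) from (i) by Cauchy's integral formula. Well-definedness of $L_n^*(f)(z)$ on $\overline{\mathbb{D}}_r$ will follow once we know $\|T_{n,k}\|_r$ grows only like $k!$ times a geometric factor in $r$, because the hypothesis $|c_k|\le MA^k/k!$ with $rA<1$ then forces $\sum c_k T_{n,k}(z)$ to converge absolutely and uniformly on $\overline{\mathbb{D}}_r$.

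For (i), I would first rewrite Lemma \ref{l:2} as a recurrence for the error $E_{n,k}$. Since the operator preserves constants and linear functions we have $T_{n,0}\equiv 1$ and $T_{n,1}(z)=z$, hence $E_{n,0}=E_{n,1}\equiv 0$. Substituting $T_{n,k}=e_k+E_{n,k}$ into the recurrence and simplifying yields
\begin{equation*}
E_{n,k+1}(z)=\frac{z(1+z)}{n}\,E_{n,k}^{\prime}(z)+\Bigl(z+\frac{k}{n}\Bigr)E_{n,k}(z)+\frac{k\,z^k(2+z)}{n}.
\end{equation*}
Since $E_{n,k}$ is a polynomial of degree $\le k$, the complex Bernstein inequality gives $\|E_{n,k}^{\prime}\|_r\le (k/r)\|E_{n,k}\|_r$, and I would deduce
\begin{equation*}
\|E_{n,k+1}\|_r\le \Bigl(r+\frac{k(r+2)}{n}\Bigr)\|E_{n,k}\|_r+\frac{k(r+2)r^k}{n}.
\end{equation*}
Under the hypothesis $n>r+2$, a straightforward induction on $k$ then yields the clean bound $\|E_{n,k}\|_r\le \dfrac{(r+2)(k+1)!\,r^{k-1}}{n}$ valid for all $k\ge 2$. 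This is the main technical obstacle: one has to choose the inductive ansatz so that the recursion closes and the factor $k(r+2)/n$ is absorbed by the gap between $(k+1)!$ and $(k+2)!$; the condition $n>r+2$ is precisely what guarantees this step works from $k=2$ onward.

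Once the estimate $\|E_{n,k}\|_r\le (r+2)(k+1)!\,r^{k-1}/n$ is in hand, it combines with $|c_k|\le MA^k/k!$ to give $\|T_{n,k}\|_r\le r^k+(r+2)(k+1)!\,r^{k-1}/n$, so that the series defining $L_n^*(f)(z)$ converges absolutely on $\overline{\mathbb{D}}_r$ (using $rA<1$). Summing the error contributions term by term,
\begin{equation*}
|L_n^*(f)(z)-f(z)|\le \sum_{k=2}^{\infty}|c_k|\,\|E_{n,k}\|_r\le \frac{M(r+2)}{rn}\sum_{k=2}^{\infty}(k+1)(rA)^k=\frac{C_{r,A,M}}{n},
\end{equation*}
which gives (i); the series converges because $rA<1$.

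For (ii), I would invoke Cauchy's formula: since $L_n^*(f)-f$ is analytic on $\mathbb{D}_R$ and $r<r_1<1/A$, for $|z|\le r$ we have
\begin{equation*}
\bigl[L_n^*(f)\bigr]^{(p)}(z)-f^{(p)}(z)=\frac{p!}{2\pi i}\oint_{|w|=r_1}\frac{L_n^*(f)(w)-f(w)}{(w-z)^{p+1}}\,dw.
\end{equation*}
Bounding the numerator on $|w|=r_1$ by $C_{r_1,A,M}/n$ from (i) (applicable since $r_1<1/A$ and $n>r+2$ can be arranged since the hypothesis is taken the same), and using $|w-z|\ge r_1-r$, the standard $ML$-estimate gives the claimed bound $p!\,r_1 C_{r_1,A,M}/\bigl(n(r_1-r)^{p+1}\bigr)$.
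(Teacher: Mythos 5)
Your proposal is correct and follows essentially the same route as the paper: you derive the same recurrence for $E_{n,k}=T_{n,k}-e_k$ from Lemma \ref{l:2}, apply Bernstein's inequality to get the inductive bound $\|E_{n,k}\|_r\le (r+2)(k+1)!\,r^{k-1}/n$, sum against $|c_k|\le MA^k/k!$ to obtain (i) and well-definedness, and use Cauchy's formula on the circle $|w|=r_1$ for (ii), exactly as in the paper's proof.
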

\begin{proof} (i) By using the recurrence relation of Lemma \ref{l:2}, we have
$$T_{n, k+1}(z)=\frac{z(1+z)}{n}T^{\prime}_{n, k}(z)+\frac{nz+k}{n}T_{n, k}(z),$$
for all $z\in\mathbb{C},k\in\{0,1,2,....\},n\in N$. From this we immediately get the recurrence formula
\begin{eqnarray*}
T_{n, k}(z)-z^k&=&\frac{z(1+z)}{n}[T_{n, k-1}(z)-z^{k-1}]^\prime+\frac{nz+k-1}{n}[T_{n, k-1}(z)-z^{k-1}]\\
&&+\frac{(k-1)}{n}z^{k-1}(2+z),
\end{eqnarray*}
for all $z\in\mathbb{C},k,n\in N$.
Now for $1\leq r<R$, if we denote the norm-$||\cdot||_r$ in $C(\overline{\mathbb{D}}_r)$, where $\overline{\mathbb{D}}_r=\{z\in \mathbb{C}:|z|\leq r\}$, then by a linear transformation, the Bernstein's inequality in the closed unit disk becomes $|P_k^\prime (z)|\leq \frac{k}{r}||P_k||_r$, for all $|z|\leq r$, where $P_k(z)$ is a polynomial of degree $\leq k$. Thus from the above recurrence relation, we get
\begin{eqnarray*}
||T_{n, k}-e_k||_r&\le&\frac{r(1+r)}{n}\cdot ||T_{n, k-1}-e_{k-1}||_r\frac{k-1}{r}+\frac{nr+k-1}{n}||T_{n, k-1}-e_{k-1}||_r\\
&&+\frac{(k-1)}{n}(2+r)r^{k-1},
\end{eqnarray*}
which, by using the notation $\rho=r+2$, implies
$$||T_{n, k}-e_k||_r \le \left(r+\frac{(2+r)(k-1)}{n}\right)\cdot ||T_{n, k-1}-e_{k-1}||_r+\frac{(k-1)}{n}(2+r)r^{k-1}$$
$$=\left(r+\frac{\rho(k-1)}{n}\right)\cdot ||T_{n, k-1}-e_{k-1}||_r+\frac{(k-1)}{n}\rho\cdot r^{k-1}.$$
In what follows we prove by mathematical induction with respect to $k$ that for $n\ge \rho$, this recurrence implies
\begin{eqnarray*}
||T_{n, k}-e_k||_r&\le&\frac{\rho \cdot (k+1)!}{n}\cdot r^{k-1} \ \  \mbox{ for all} \ \ k\ge 1.
\end{eqnarray*}
Indeed for $k=1$ it is trivial, as the left-hand side is zero. Suppose that it is valid for $k$, the above recurrence relation implies that
\begin{eqnarray*}
||T_{n, k+1}-e_{k+1}||_r&\le&\left(r+\frac{\rho\cdot k}{n}\right)\cdot \frac{\rho\cdot (k+1)!}{n}r^{k-1}+\frac{\rho \cdot k}{n}r^{k}.
\end{eqnarray*}
It remains to prove that
\begin{eqnarray*}
\left(r+\frac{\rho\cdot k}{n}\right)\cdot \frac{\rho \cdot (k+1)!}{n}r^{k-1}+\frac{\rho\cdot k}{n}r^{k}\le \frac{\rho \cdot (k+2)!}{n}r^k,
\end{eqnarray*}
or after simplifications, equivalently to
\begin{eqnarray*}
\left(r+\frac{\rho \cdot k}{n}\right)\cdot (k+1)!+r k \le (k+2)! \cdot r,
\end{eqnarray*}
for all $k\in \mathbb{N}$ and $r\ge 1$.

Since by $n\ge \rho$, we get
$$\left(r+\frac{\rho\cdot k}{n}\right)\cdot (k+1)!+r k \le \left(r+k\right)\cdot (k+1)!+r k,$$
it is good enough if we prove that
$$\left(r+k\right)\cdot (k+1)!+r k \le (k+2)! \cdot r.$$
But this last inequality is obviously valid for all $k\ge 1$ (and fixed $r\ge 1$).

In conclusion, the required estimate holds.

Now, let us prove that $L_{n}^{*}(f)(z)$ is well-defined for $|z|\le r$ and $n > r+2$. Indeed, we have
$$|L_{n}^{*}(f)(z)|\le \sum_{k=0}^\infty |c_k|\cdot | L_n(e_k)(z)|=\sum_{k=0}^\infty |c_k|\cdot |T_{n,k}(z)|$$
$$\le M\cdot \sum_{k=0}^{\infty} \frac{A^{k}}{k !}(\|T_{n,k}-e_{k}-e_{k}\|_{r}+r^{k})\le Me^{A r}+
M\sum_{k=0}^\infty \frac{A^{k}}{k !}\cdot \frac{(r+2)(k+1)!}{n}\cdot r^{k}$$
$$\le Me^{A r}+ M(r+2)\sum_{k=0}^\infty (A r)^{k}(k+1)<+\infty.$$
In conclusion, we get
\begin{eqnarray*}
|L_n^{*}(f)(z)-f(z)|&\le& \sum_{k=2}^\infty |c_k|\cdot |T_{n,k}(z)-e_k(z)|\le \sum_{k=2}^\infty M\frac{A^k}{k!}\frac {(r+2)\cdot (k+1)!}{n}r^{k-1}\\
&=& \frac{M(r+2)}{r n}\sum_{k=2}^\infty (k+1)\cdot (r A)^k=\frac{C_{r, A, M}}{n},
\end{eqnarray*}
where $C_{r,A, M}=\frac{M(r+2)}{r}\cdot \sum_{k=2}^\infty (k+1)\cdot (r A)^k <\infty$ for all $1\le r <\frac{1}{A},$ taking into account that the series $\sum_{k=1}^\infty (k+1)u^k$ is uniformly convergent in any compact disk included in the open unit disk.

(ii) Denote by $\gamma$ the circle of radius $r_{1}>r$ and center $0$.
For any $|z|\le r$ and $v\in \gamma$, we have $|v-z|\ge r_{1}-r$ and by the Cauchy's formula, for all $|z|\le r$ and $n > r+2$ it follows
\begin{eqnarray}
|[L_{n}^{*}(f)]^{(p)}(z)-f^{(p)}(z)|&=& \frac{p!}{2\pi}\left
|\int_{\gamma}\frac{L_{n}^{*}(f)(v)-f(v)}{(v-z)^{p+1}}dv\right |
\le  \frac{C_{r_{1}, A, M}}{n}\frac{p!}{2\pi}\frac{2\pi r_{1}}{(r_{1}-r)^{p+1}} \nonumber \\
&=& \frac{C_{r_{1}, A, M}}{n}\frac{p! r_{1}}{(r_{1}-r)^{p+1}}, \nonumber
\end{eqnarray}
which proves (ii) and the theorem.
\end{proof}
The following Voronovskaja type result holds.
\begin{theorem}\label{t:2} For $f:\mathbb{D}_{R}\to \mathbb{C}$, $2<R<+\infty$, analytic on $\mathbb{D}_{R}$, i.e. $f(z)=\sum_{k=0}^{\infty}c_{k}z^{k}$, for all $z\in \mathbb{D}_{R}$, suppose that there exist $M>0$ and $A\in(\frac{1}{R},1)$, with the property that $|c_k|\le M\frac{A^k}{k!},$ for all $k=0,1,...,$.

If $1\le r< r+1<\frac{1}{A}$ then for all $|z|\le r$ and $n\in \mathbb{N}$ with $n>r+2$, we have
$$\left|L_{n}^{*}(f)(z)-f(z)-\frac{z(z+2)}{2n}f^{\prime \prime}(z)\right|\leq \frac{C_{r, A, M}(f)}{n^2},$$
where $C_{r, A, M}(f)=M\sum_{k=2}^\infty \frac{k-1}{k !}[A(r+1)]^{k} B_{k,r}+\frac{4 M(r+2)}{r}\cdot \frac{1}{ln^{2}(1/\rho)}\cdot \left (\frac{1}{(1-A r)^{2}}+\frac{4}{1-A r}\right )<\infty$ and $$B_{k,r}=(k-1)^2(k-2)r^2+2(k-1)(k-2)(2k-3)(r+1)+(r+1)(r+2)\cdot (k+1)!.$$
\end{theorem}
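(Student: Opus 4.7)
The strategy parallels Theorem \ref{t:1}. Since $L_{n}^{*}(f)(z)=\sum_{k\ge 0}c_{k}T_{n,k}(z)$ and $f''(z)=\sum_{k\ge 2}k(k-1)c_{k}z^{k-2}$, the quantity to be bounded equals $\sum_{k=2}^{\infty}c_{k}E_{n,k}(z)$ with the Voronovskaja residual
$$E_{n,k}(z):=T_{n,k}(z)-z^{k}-\frac{k(k-1)z^{k-1}(z+2)}{2n}.$$
The plan is to prove $\|E_{n,k}\|_{r}\le B_{k,r}/n^{2}$ with $B_{k,r}$ as in the statement; the rest is then a series summation under the hypothesis $A(r+1)<1$, exactly as in Theorem \ref{t:1}.

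Substituting $T_{n,k}=E_{n,k}+z^{k}+\tfrac{k(k-1)z^{k-1}(z+2)}{2n}$ into the identity of Lemma \ref{l:2} turns it into a recurrence of the same shape,
$$E_{n,k+1}(z)=\frac{z(1+z)}{n}E'_{n,k}(z)+\frac{nz+k}{n}E_{n,k}(z)+Y_{k}(z),$$
in which $Y_{k}(z)$ is an explicit polynomial of degree $k+1$ whose coefficients are cubic in $k$ and carry an overall prefactor $1/n^{2}$: this is by design, because the second-order correction $\tfrac{k(k-1)z^{k-1}(z+2)}{2n}$ was engineered precisely to cancel the $O(1/n)$ remainder. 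A short direct check gives $Y_{0}=Y_{1}=0$, hence $E_{n,0}=E_{n,1}=E_{n,2}=0$, so the induction really starts at $k=3$ with $\|Y_{k}\|_{r}\le Q_{k,r}/n^{2}$ for an explicit $Q_{k,r}$ that is polynomial in $k$ and $r$.

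Since $E_{n,k}$ is a polynomial of degree $\le k$, Bernstein's inequality on $\overline{\mathbb{D}}_{r}$ gives $\|E'_{n,k}\|_{r}\le (k/r)\|E_{n,k}\|_{r}$, and the recurrence yields
$$\|E_{n,k+1}\|_{r}\le\left(r+\frac{(r+2)k}{n}\right)\|E_{n,k}\|_{r}+\|Y_{k}\|_{r}\le(r+k)\|E_{n,k}\|_{r}+\|Y_{k}\|_{r},$$
the second inequality using $n>r+2$. Iterating this bound from $E_{n,2}=0$, in exactly the style of the induction of Theorem \ref{t:1} but one order higher, produces $\|E_{n,k}\|_{r}\le B_{k,r}/n^{2}$ with the three-piece $B_{k,r}$ of the statement: the factorial summand $(r+1)(r+2)(k+1)!$ absorbs the iterative amplification by the factor $r+k$, while the two polynomial-in-$k$ summands $(k-1)^{2}(k-2)r^{2}$ and $2(k-1)(k-2)(2k-3)(r+1)$ encode the unamplified contributions carried by $Y_{k}$.

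Finally, $\sum_{k\ge 2}|c_{k}|\|E_{n,k}\|_{r}\le (M/n^{2})\sum_{k\ge 2}(A^{k}/k!)B_{k,r}$ splits into two regimes. The factorial piece of $B_{k,r}$ is neutralised by the $1/k!$ coming from $|c_{k}|$ and gives a convergent geometric series in $A(r+1)<1$, producing the first half of $C_{r,A,M}(f)$. The polynomial-in-$k$ pieces are controlled by extracting $\sup_{k}k^{\alpha}(Ar)^{k}\le c_{\alpha}/\ln^{\alpha}(1/\rho)$ (with the $\rho$ of the statement tied to $Ar<1$) and summing the geometric tails $1/(1-Ar)$ and $1/(1-Ar)^{2}$, producing the second half. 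The main obstacle is the bookkeeping of $Y_{k}$ through the recurrence: the three contribution types must be propagated separately, so that the polynomial-in-$k$ terms do not themselves inflate to factorial size under iterated multiplication by $(r+k)$; this is the usual second-order complication in Voronovskaja-type proofs for Durrmeyer-style operators.
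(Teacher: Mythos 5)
There is a genuine gap at the central step, and it is precisely the step where your route departs from the paper's. You fold the Bernstein bound $\|E_{n,k}'\|_r\le (k/r)\|E_{n,k}\|_r$ into the multiplicative coefficient, obtaining the iteration $\|E_{n,k+1}\|_r\le (r+k)\|E_{n,k}\|_r+\|Y_k\|_r$, and then assert that iterating this gives $\|E_{n,k}\|_r\le B_{k,r}/n^2$ with the three-piece $B_{k,r}$ of the statement. That does not follow: iterating with the factor $r+k$ amplifies each $Y_j$-contribution by $\prod_{i=j}^{k-1}(r+i)\sim \Gamma(k+r)/\Gamma(j+r)$, which for $r>2$ exceeds the factorial summand $(r+1)(r+2)(k+1)!$ of $B_{k,r}$ by an unbounded factor of order $k^{r-2}$; and your claim that the two polynomial-in-$k$ summands of $B_{k,r}$ are "unamplified contributions" is inconsistent with your own recurrence, in which every $Y_j$ except the last one is amplified. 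The paper avoids exactly this trap by \emph{not} recycling $\|E_{n,k-1}\|_r$ through the derivative term: it bounds $\frac{z(1+z)}{n}E_{k-1,n}'(z)$ additively, using Bernstein plus the first-order estimate $\|T_{n,k-1}-e_{k-1}\|_r\le k!(r+2)r^{k-2}/n$ from Theorem \ref{t:1}, which produces the explicit $O(1/n^2)$ term $(r+1)(r+2)(k+1)!\,r^{k-2}/n^2$ (this is where the factorial piece of $B_{k,r}$ actually comes from), leaving only the multiplicative coefficient $r+(k-1)/n$. That coefficient is $\le r+1$ only when $k\le n+1$, so the paper splits the series: for $k\le n+1$ the iteration gives $|E_{k,n}(z)|\le (k-1)(r+1)^kB_{k,r}/n^2$ (note the $(r+1)^k$ factor, which you drop even though your final series is claimed to run in powers of $A(r+1)$ — an internal inconsistency), and for the tail $k\ge n+2$ it falls back on the $O(1/n)$ estimate of Theorem \ref{t:1} together with $\rho^{n}\le 2/(n^2\ln^2(1/\rho))$, $\rho=Ar$, which is the true origin of the $\frac{1}{\ln^2(1/\rho)}$ half of $C_{r,A,M}(f)$. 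Your proposal never performs this split and misattributes the logarithmic factor to a bound of the form $\sup_k k^{\alpha}(Ar)^k\le c_{\alpha}/\ln^{\alpha}(1/\rho)$ applied inside the main sum, which is not how the stated constant arises.

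To be fair, your scheme (iteration coefficient $r+k$ valid for all $k$, no tail split) could probably be pushed to \emph{some} Voronovskaja estimate of order $1/n^2$, since the resulting $\Gamma(k+r)$-type growth is still killed by the hypothesis $|c_k|\le MA^k/k!$; but it would yield a different constant, not the $C_{r,A,M}(f)$ and $B_{k,r}$ asserted in the statement, and the bookkeeping you describe does not justify the bounds you claim. As written, the key inequality $\|E_{n,k}\|_r\le B_{k,r}/n^2$ is unproved (and false in general), so the proof is not correct.
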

\begin{proof} By Theorem \ref{t:1}, (i), $L_{n}^{*}(f)(z)$ is well-defined, for all $|z|\le r$, $n > r+2$. We can write
$$\frac{z(z+2)f^{\prime \prime}(z)}{2n}=\frac{z(z+2)}{2n}\sum_{k=2}^\infty c_kk(k-1)z^{k-2}
=\frac{1}{2n}\sum_{k=1}^\infty c_kk(k-1)(z+2)z^{k-1}.$$
Thus
$$\left|L_n^{*}(f)(z)-f(z)-\frac{z(z+2)}{2n}f^{\prime \prime}(z)\right|
\leq \sum_{k=1}^\infty |c_k|\left|T_{n,k}(z)-e_k(z)-\frac{k(k-1)(z+2)z^{k-1}}{2n}\right|.$$

By Lemma \ref{l:2}, for all $n\in \mathbb{N}, z\in \mathbb{C}$ and $k=0,1,2,...$, we have
$$T_{n, k+1}(z)=\frac{z(1+z)}{n}T^{\prime}_{n, k}(z)+\frac{nz+k
}{n}T_{n, k}(z).$$
If we denote
$$E_{k,n}(z)=T_{n,k}(z)-e_k(z)-\frac{k(k-1)(z+2)z^{k-1}}{2n},$$
then it is obvious that $E_{k,n}(z)$ is a polynomial of degree less than or equal to $k$ and by simple computation and the use of above recurrence relation, we are led to
$$E_{k,n}(z)=\frac{z(1+z)}{n}E_{k-1,n}^{\prime}(z)+\frac{nz+k-1}{n}E_{k-1,n}(z)+X_{k,n}(z),$$
where
$$X_{k,n}(z)=\frac{z^{k-2}}{2n^2}\left[(k-1)^2(k-2)z^2+2(k-1)(k-2)(2k-3)z+2(k-1)(k-2)(2k-3)\right]$$
for all $k\geq 2,n\in \mathbb{N}$ and $|z|\le r$.

Using the estimate in the proof of Theorem \ref{t:1},  we have
$$|T_{n,k}(z)-e_k(z)|\leq \frac{(k+1)! (r+2) r^{k-1}}{n},$$
for all $k\ge 1, n\ge r+2, |z|\leq r$, with $1\leq r$.

It follows
$$|E_{k,n}(z)|\leq \frac{r(r+1)}{n}|E_{k-1,n}^{\prime}(z)|+\left (r+\frac{k-1}{n}\right )|E_{k-1,n}(z)|+|X_{k,n}(z)|.$$
Now we shall find the estimation of $|E_{k-1,n}^{\prime}(z)|$ for $k\geq 2$. Taking into account the fact that $E_{k-1,n}(z)$ is a polynomial of degree $\leq k-1$, we have
$$|E_{k-1,n}^{\prime}(z)|\leq \frac{k-1}{r}||E_{k-1,n}||_r$$
$$\leq \frac{k-1}{r}\left[||T_{n,k-1}(z)-e_{k-1}(z)||_r+\left|\left|\frac{(k-1)(k-2)[e_1+2]e_{k-2}}{2 n}\right|\right|_r\right]$$
$$\leq \frac{k-1}{r}\left[\frac{k! (r+2) r^{k-2}}{n}+\frac{r^{k-2}(k-1)(k-2)(r+2)}{n}\right]$$
$$ \leq \frac{(r+2) r^{k-2} (k-1)(k! + (k-1)(k-2))}{nr}.$$
Thus, by the obvious inequality $(k-1)(k! + (k-1)(k-2))\le (k+1)!$, we get
$$\frac{r(1+r)}{n}|E_{k-1,n}^{\prime}(z)|\leq \frac{(r+1)(r+2)\cdot (k+1)! r^{k-2}}{n^2}$$
and
$$|E_{k,n}(z)|\leq  \frac{(r+1)(r+2)\cdot (k+1)! r^{k-2}}{n^2}+\left (r+\frac{k-1}{n}\right )|E_{k-1,n}(z)|+|X_{k,n}(z)|,$$
for all $|z|\leq r, k\geq 2$ and $n > r+2$.

For $ k-1\le n$ (i.e. $k\le n+1$) and $|z|\le r$, taking into account that $r+(k-1)/n\le r+1$, we get
$$|E_{k,n}(z)|\leq \frac{(r+1)(r+2)(k+1)!\cdot r^{k-2}}{n^2}+(r+1)|E_{k-1,n}(z)|+|X_{k,n}(z)|,$$
where
$$|X_{k,n}(z)|\le \frac{r^{k-2}}{n^2}\left[(k-1)^2(k-2)r^2+2(k-1)(k-2)(2k-3)r+2(k-1)(k-2)(2k-3)\right]$$
$$\le \frac{r^{k-2}}{n^2} A_{k,r},$$
for all $|z|\le r, k\ge 1, n>r+2,$ where
$$A_{k,r}=(k-1)^2(k-2)r^2+2(k-1)(k-2)(2k-3)(r+1).$$
Thus for all $|z|\le r, n > r+2$ and $k\le n+1$,
$$|E_{k,n}(z)|\le (r+1)|E_{k-1,n}(z)|+\frac{r^{k-2}}{n^2}B_{k,r},$$
where $$B_{k,r}=A_{k,r}+(r+1)(r+2)\cdot (k+1)!.$$
But $E_{0,n}(z)=E_{1,n}(z)=0$, for any $z\in \mathbb{C}$ and therefore by writing last inequality for $2\le k\le n+1$, we easily obtain step by step the following
$$|E_{k,n}(z)|\leq \frac{(r+1)^{k}}{n^2}\sum_{j=2}^{k} B_{j,r}\le \frac{(k-1)(r+1)^k}{n^2}B_{k,r}.$$
It follows that
$$\left|L_n^{*}(f)(z)-f(z)-\frac{z(z+2)}{2n}f^{\prime \prime}(z)\right|\leq \sum_{k=2}^{n+1} |c_k|\cdot |E_{k,n}(z)|+\sum_{k=n+2}^{\infty} |c_k|\cdot |E_{k,n}(z)|$$
$$\le \frac{1}{n^2}\sum_{k=2}^{\infty}|c_k|(k-1)(r+1)^kB_{k,r}
+\sum_{k=n+2}^{\infty} |c_k|\cdot \left[|T_{n,k}(z)-e_k(z)|+\frac{k(k-1)(r+2)r^{k-1}}{2n}\right]$$
$$\le \frac{1}{n^2}\sum_{k=2}^{\infty}|c_k|(k-1)(r+1)^k B_{k,r}
+\sum_{k=n+2}^{\infty} |c_k|\cdot  \left[\frac{(r+2)\cdot (k+1)!}{n}\cdot r^{k-1}+\frac{k(k-1)(r+2)r^{k-1}}{2n}\right]$$
$$\le \frac{1}{n^2}\sum_{k=2}^{\infty}|c_k|(k-1)(r+1)^k B_{k,r}
+2\sum_{k=n+2}^{\infty} |c_k|\cdot \frac{(r+2)\cdot (k+1)!}{n}\cdot r^{k-1}$$
$$\le \frac{1}{n^2}\sum_{k=2}^{\infty}|c_k|(k-1)(r+1)^k B_{k,r}
+\frac{2 M(r+2)}{n r}\sum_{k=n+2}^{\infty} (k+1)(A r)^k$$
$$=\frac{1}{n^2}\sum_{k=2}^{\infty}|c_k|(k-1)(r+1)^k B_{k,r}
+\frac{2 M(r+2)}{n r}\cdot (A r)^{n+2}\sum_{k=n+2}^{\infty} (k+1)(A r)^{k-n-2}.$$
But, denoting for simplicity $\rho=A r <1$, we easily obtain
$$(A r)^{n+2}\cdot \sum_{k=n+2}^{\infty} (k+1)(A r)^{k-n-2}=\rho^{n+2}\cdot\left [\sum_{j=0}^{\infty}j \rho^{j}+(n+3)\sum_{j=0}^{\infty}\rho^{j}\right ]$$
$$=\rho^{n+2}\left [\rho \cdot \left (\sum_{j=0}^{\infty}\rho^{j}\right )^{\prime}+(n+3)\cdot \frac{1}{1-\rho}\right ]
=\rho^{n+3}\cdot \frac{1}{(1-\rho)^{2}}+\rho^{n+2}\cdot (n+3)\cdot \frac{1}{1-\rho},$$
which leads to the estimate
$$\left|L_n^{*}(f)(z)-f(z)-\frac{z(z+2)}{2n}f^{\prime \prime}(z)\right|$$
$$\leq \frac{1}{n^2}\sum_{k=2}^{\infty}|c_k|(k-1)(r+1)^k B_{k,r}+\frac{2M(r+2)}{n r}\cdot \left [\frac{(A r)^{n+3}}{(1-A r)^{2}}+\frac{(A r)^{n+2} (n+3)}{1-A r}\right ]$$
$$\le M \frac{1}{n^2}\sum_{k=2}^{\infty}\frac{k-1}{k!}[A(r+1)]^k \cdot B_{k,r}
+\frac{4 M (r+2)}{r n^{2}}\cdot \frac{1}{ln^{2}(1/(Ar))}\cdot \left [\frac{1}{(1-A r)^{2}}+\frac{4}{1-A r}\right ],$$
where we used the inequality
$$\rho^{n+3}\le \rho^{n+2}\le \rho^{n}\le \frac{2}{ln^{2}(1/(A r))}\cdot \frac{1}{n^{2}}, \mbox{ for all } n\in \mathbb{N},$$
applied for $\rho=A r<1$ and where for $(r+1)A<1$, we obviously have the series $\sum_{k=2}^{\infty}|c_k|(k-1)(r+1)^kB_{k,r}$ convergent.

Indeed, by $e^{x}=1+x+\frac{x^{2}}{2}+\frac{x^{3}}{6}+ ... $, we get $e^{x}\ge \frac{x^{2}}{2}$, for all $x\ge 0$. Then, by $(1/\rho)^{n}=e^{n ln(1/\rho)}$, it follows $\frac{1}{\rho^{n}}\ge \frac{n^{2} ln^{2}(1/\rho)}{2}$, for all $n\in \mathbb{N}$, which immediately implies the above claimed inequality.
\end{proof}
The following exact order of approximation can be obtained.

\begin{theorem}\label{t:3}
Suppose that the hypothesis in Theorem \ref{t:2} hold.

(i) If $f$ is not a polynomial of degree $\le 1$ then for all $n > r+2$ we have
$$\|L_{n}^{*}(f)-f\|_{r}\sim \frac{1}{n}.$$
where the constants in the equivalence depend only on $f$, $R$, $A$ and $r$.

(ii) If $1\le r < r_{1} < r_{1}+ 1 < 1/A$ and $f$ is not a polynomial of degree $\le p-1, (p\ge 1)$ then
$$\|[L_{n}^{*}(f)]^{(p)}-f^{(p)}\|_{r}\sim \frac{1}{n}, \mbox{ for all } n > r+2,$$
where the constants in the equivalence depend only on $f$, $R$, $A$, $r$, $r_{1}$ and $p$.
\end{theorem}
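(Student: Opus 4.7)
My approach is to take the upper bounds directly from Theorem \ref{t:1} and to extract the matching lower bounds from the Voronovskaja expansion of Theorem \ref{t:2}, together with an analyticity argument that rules out accidental degeneration.

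For part (i), the plan is to rewrite the conclusion of Theorem \ref{t:2} in the form
$$L_{n}^{*}(f)(z)-f(z)=\frac{1}{n}\left[\frac{z(z+2)}{2}f^{\prime\prime}(z)+\frac{1}{n}R_{n}(z)\right],$$
where $\|R_{n}\|_{r}\le C_{r,A,M}(f)$ by Theorem \ref{t:2}. A reverse triangle inequality then gives
$$\|L_{n}^{*}(f)-f\|_{r}\ge \frac{1}{n}\left[\left\|\tfrac{e_{1}(e_{1}+2)}{2}f^{\prime\prime}\right\|_{r}-\frac{C_{r,A,M}(f)}{n}\right].$$
Because $f$ is not a polynomial of degree $\le 1$, the analytic function $\varphi(z):=\frac{z(z+2)}{2}f^{\prime\prime}(z)$ is not identically zero on $\mathbb{D}_{R}$, so $a:=\|\varphi\|_{r}>0$. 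Choosing $n_{0}$ with $C_{r,A,M}(f)/n\le a/2$ for $n\ge n_{0}$ delivers $\|L_{n}^{*}(f)-f\|_{r}\ge a/(2n)$ asymptotically. The finitely many indices $r+2<n<n_{0}$ are absorbed into a possibly smaller equivalence constant, which is legitimate once one observes that $L_{n}^{*}(f)\equiv f$ on $\overline{\mathbb{D}}_{r}$ would force the power-series identity $\sum_{k}c_{k}(T_{n,k}(z)-z^{k})\equiv 0$, something the recurrence of Lemma \ref{l:2} precludes for a non-polynomial $f$.

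For part (ii), my plan is to pass to the derivatives via Cauchy's formula applied to the Voronovskaja remainder. Setting
$$H_{n}(v):=L_{n}^{*}(f)(v)-f(v)-\frac{v(v+2)}{2n}f^{\prime\prime}(v),$$
Theorem \ref{t:2} applied on the disk of radius $r_{1}$ (the hypothesis $r_{1}+1<1/A$ is exactly what makes this legitimate) gives $\|H_{n}\|_{r_{1}}\le C_{r_{1},A,M}(f)/n^{2}$. Cauchy's integral formula on the circle $|v|=r_{1}$ then yields
$$\|H_{n}^{(p)}\|_{r}\le \frac{p!\,r_{1}\,C_{r_{1},A,M}(f)}{n^{2}(r_{1}-r)^{p+1}},$$
from which
$$\|[L_{n}^{*}(f)]^{(p)}-f^{(p)}\|_{r}\ge \frac{1}{n}\|\varphi^{(p)}\|_{r}-\frac{p!\,r_{1}\,C_{r_{1},A,M}(f)}{n^{2}(r_{1}-r)^{p+1}}.$$

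The decisive point, which I expect to be the main obstacle, is proving $\|\varphi^{(p)}\|_{r}>0$ whenever $f$ is not a polynomial of degree $\le p-1$. My argument will be by contradiction: if $\varphi^{(p)}\equiv 0$ on $\overline{\mathbb{D}}_{r}$, then by analytic continuation $\varphi$ is a polynomial of degree $\le p-1$ on all of $\mathbb{D}_{R}$, so $z(z+2)f^{\prime\prime}(z)=2\varphi(z)$ is a polynomial of degree $\le p-1$; since $f^{\prime\prime}$ is analytic at $z=0$ and $z=-2$, this polynomial must be divisible by $z(z+2)$, forcing $f^{\prime\prime}$ to be a polynomial of degree $\le p-3$, and hence $f$ a polynomial of degree $\le p-1$, contradicting the hypothesis. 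With $\|\varphi^{(p)}\|_{r}>0$ in hand, the same asymptotic-plus-finite-cleanup procedure as in part (i) produces the lower bound of the form $K/n$, while the matching upper bound is Theorem \ref{t:1}(ii).
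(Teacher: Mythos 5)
Your proposal is correct and follows essentially the same route as the paper's own proof: for (i) the same Voronovskaja-based decomposition, reverse triangle inequality, positivity of $\left\|\tfrac{e_{1}(e_{1}+2)}{2}f^{\prime\prime}\right\|_{r}$ and finite-index cleanup, and for (ii) the same Cauchy-formula transfer of the Voronovskaja remainder from the circle of radius $r_{1}$ together with the positivity of $\left\|\left[e_{1}(e_{1}+2)f^{\prime\prime}\right]^{(p)}\right\|_{r}$, your divisibility argument merely spelling out the paper's ``obviously implies'' step. The only blemish, shared with the paper itself, is the edge case $p=1$, where $z(z+2)f^{\prime\prime}(z)$ being constant only forces $f$ to be linear (not constant), so the contradiction with ``$f$ not a polynomial of degree $\le p-1$'' is not obtained there.
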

\begin{proof} (i) For all $|z|\le r$ and $n\in\mathbb{N}$ with $n > r+2$, we can write
\begin{eqnarray*}
L_n^{*}(f)(z)-f(z)&=&\frac{1}{n}\bigg[\frac{z(z+2)}{2}f^{\prime \prime}(z) \\
&&+\frac{1}{n}\cdot n^{2}\left(L_n^{*}(f)(z)-f(z)-\frac{z(z+2)}{2n}f^{\prime \prime}(z)\right)\bigg]
\end{eqnarray*}
Applying the inequality
\begin{eqnarray*}
\|F+G\|_{r}\ge|\,\|F\|_{r}-\|G\|_{r}\,|\ge\|F\|_{r}-\|G\|_{r},
\end{eqnarray*}
we obtain
$$\|L_n^{*}(f)-f\|_{r}\ge\frac{1}{n}\left [\left\|\frac{e_1(e_1+2)}{2}f^{\prime \prime}\right\|_{r}\right .
\left .- \frac{1}{n}\cdot n^{2}\left\|L_n^{*}(f)-f-\frac{e_1(e_1+2)f^{\prime \prime}}{2n}\right\|_{r}\right ].$$
Since $f$ is not a polynomial of degree $\le 1$, we get $\left\|\frac{e_1(e_1+2)}{2}f^{\prime \prime}\right\|_{r}>0$. Indeed, supposing the contrary, it follows that
$$\frac{z(z+2)}{2}f^{\prime \prime}(z)=0,\, \mbox{ for all } |z|\le r.$$
The last equality is equivalent to $f(z)=C_1z+C_2$, with $C_1, C_2$ are constants, a contradiction with the hypothesis.
Now by Theorem \ref{t:2}, we have
$$n^{2}\left\|L_n^{*}(f)-f-\frac{e_1(e_1+2)f^{\prime \prime}}{2n}\right\|_{r}\le C_{r, A, M}(f),$$
for all $n > r+2$.

Thus, there exists $n_{0} > r+2$ (depending on $f$ and $r$ only) such that for all $n\ge n_0,$ we have
$$\left\|\frac{e_1(e_1+2)}{2}f^{\prime \prime}\right\|_{r}
\left .- \frac{1}{n}\cdot n^{2}\left\|L_n^{*}(f)-f-\frac{e_1(e_1+2)f^{\prime \prime}}{2n}\right\|_{r}\right ]
\ge \frac{1}{4}\left\|e_1(e_1+2)f^{\prime \prime}\right\|_{r},$$
which implies that
\begin{eqnarray*}
\|L_n^{*}(f)-f\|_{r}\ge \frac{1}{4 n}\left\|e_1(e_1+2)f^{\prime \prime}\right\|_{r},
\end{eqnarray*} for all $n\ge n_0$.\\
For $r+2 < n \le n_{0}-1$, we get $\|L_n^{*}(f)-f\|_{r}\ge\frac{M_{r,n}(f)}{n}$ with $M_{r,n}(f)=n\cdot \|L_n^{*}(f)-f\|_{r}>0$ (since
$\|L_n^{*}(f)-f\|_{r}=0$ for a certain $n$ is valid only for $f$ a polynomial of degree $\le 1$, contradicting the hypothesis on $f$).

Therefore, finally we have
$$||L_n^{*}(f)-f||_{r}\ge \frac{C_r(f)}{n}$$ for all $n>r+2$, where
$$C_r(f)=\min_{n_{0}-1\ge n > r+2}\left\{M_{r,n}(f),....,M_{r,n_{0}-1}(f),\frac{1}{4}\left\|e_1(e_1+2)f^{\prime \prime}\right\|_{r}\right\},$$
which combined with Theorem \ref{t:1}, (i), proves the desired conclusion.

(ii) The upper estimate is exactly Theorem \ref{t:1}, (ii), therefore it remains to prove the lower estimate.
Denote by $\gamma$ the circle of radius $r_{1}>r$ and center $0$.
For any $|z|\le r$ and $v\in \gamma$, we have $|v-z|\ge r_{1}-r$ and by the Cauchy's formula, for all $|z|\le r$ and $n > r+2$ it follows
$$[L_{n}^{*}(f)]^{(p)}(z) - f^{(p)}(z)=\frac{p!}{2\pi i}\int_{\gamma}\frac{L_{n}^{*}(f)(v)-f(v)}{(v-z)^{p+1}}dv,$$
where $|v-z|\ge r_{1}-r$, for all $v\in \gamma$.

Since for $v\in \gamma$ we get
$$L_{n}^{*}(f)(v)-f(v)$$
$$=\frac{1}{n}\left \{\frac{v(v+2)}{2}f^{\prime \prime}(v)+
\frac{1}{n}\left [n^{2}\left (L_{n}^{*}(f)(v)-f(v)-\frac{v(v+2)f^{\prime \prime}(v)}{2n}\right )\right ]\right \},$$
replaced in the Cauchy's formula implies
$$[L_{n}^{*}(f)]^{(p)}(z) - f^{(p)}(z)=\frac{1}{n}\left \{\frac{p!}{2\pi i}\int_{\gamma}\frac{v(v+2)f^{\prime \prime}(v)}{2(v-z)^{p+1}}dv\right .$$
$$+\left .\frac{1}{n}\cdot \frac{p!}{2\pi i}\int_{\gamma}\frac{n^{2}\left (L_{n}^{*}(f)(v)-f(v)-\frac{v(v+2)f^{\prime \prime}(v)}{2n}\right )}{(v-z)^{p+1}}dv\right \}$$
$$=\frac{1}{n}\left \{\frac{1}{2}\cdot \left [z(z+2) f^{\prime \prime}(z)\right ]^{(p)}+\frac{1}{n}\cdot \frac{p!}{2\pi i}\int_{\gamma}\frac{n^{2}\left (L_{n}^{*}(f)(v)-f(v)-\frac{v(v+2) f^{\prime \prime}(v)}{2n}\right )}{(v-z)^{p+1}}dv\right \}.$$
Passing to the norm $\|\cdot \|_{r}$, we obtain
$$\|[L_{n}^{*}(f)]^{(p)} - f^{(p)}\|_{r}$$
$$\ge \frac{1}{n}\left \{\frac{1}{2}\left \|\left [e_{1}(e_{1}+2)f^{\prime \prime}\right ]^{(p)}\right \|_{r} -\frac{1}{n}\left \|\frac{p!}{2\pi}\int_{\gamma}\frac{n^{2}\left (L_{n}^{*}(f)(v)-f(v)-\frac{v(v+2) f^{\prime \prime}(v)}{2n}\right )}{(v-z)^{p+1}}dv\right \|_{r}\right \},$$
where by Theorem \ref{t:2}, for all $n > r+2$ it follows
\begin{eqnarray}
&&\left \|\frac{p!}{2\pi}\int_{\gamma}\frac{n^{2}\left (L_{n}^{*}(f)(v)-f(v)-\frac{v(v+2)f^{\prime \prime}(v)}{2n}\right )}{(v-z)^{p+1}}dv\right \|_{r} \nonumber \\
&\le & \frac{p!}{2\pi}\cdot \frac{2\pi r_{1}n^{2}}{K^{p+1}}\left \|L_{n}^{*}(f)-f-\frac{e_{1}(e_1+2)f^{\prime \prime}}{2n}\right \|_{r_{1}} \nonumber \\
&\le &  C_{r, A, M}(f)\cdot \frac{p! r_{1}}{(r_{1}-r)^{p+1}}. \nonumber
\end{eqnarray}
Now, by hypothesis on $f$ we have $\left \|\left [e_{1}(e_{1}+2)f^{\prime \prime}\right ]^{(p)}\right \|_{r}>0$. Indeed, supposing the contrary it follows that $z(z+2)f^{\prime \prime}(z)$ is a polynomial
of degree $\le p-1$, which by the analyticity of $f$ obviously implies that $f$ is a polynomial
of degree $\le p-1$, a contradiction with the hypothesis.

For the rest of the proof, reasoning exactly as in the proof of the above point (i), we immediately get the required conclusion.
\end{proof}

\end{document}